\newcolumntype{P}[1]{>{\centering\arraybackslash}p{#1}}
\newcolumntype{M}[1]{>{\centering\arraybackslash}m{#1}}
\DeclareMathOperator{\Tr}{Tr}
\theoremstyle{plain}
\newtheorem{thm}{\protect\theoremname}
\theoremstyle{plain}
\newtheorem{remark}{Remark}
\newtheorem{define}{Definition}
\newtheorem{assume}{Assumption}
\providecommand{\lemmaname}{Lemma}
\providecommand{\theoremname}{Theorem}
\newcommand{\prox}{\mathrm{prox}}
\begin{document}

\title{\vspace{0.1in} Robust Convergence Analysis of \\ Three-Operator Splitting}
\author{Han Wang, Mahyar Fazlyab, Shaoru Chen, Victor M. Preciado%

\thanks{Han Wang is with the Department of Applied Mathematics and Computational Science, University of Pennsylvania, Philadelphia, PA, 19104, USA (e-mail: wanghan2@sas.upenn.edu). 

Mahyar Fazlyab, Shaoru Chen and Victor M. Preciado are with the Department of Electrical and Systems Engineering, University of Pennsylvania, Philadelphia, PA, 19104, USA (e-mail: \{mahyarfa, srchen, preciado\}@seas.upenn.edu).}}
	
	\maketitle
	
		\begin{abstract}
Operator splitting methods solve composite optimization
problems by breaking them into smaller sub-problems that can be solved
sequentially or in parallel. In this paper, we propose a unified
framework for certifying both linear and sublinear convergence rates
for three-operator splitting (TOS) method under a variety of
assumptions about the objective function. By viewing the algorithm as
a dynamical system with feedback uncertainty (the oracle model), we
leverage robust control theory to analyze the worst-case
performance of the algorithm using matrix inequalities. We then show
how these matrix inequalities can be used to verify sublinear/linear convergence of the TOS algorithm and guide the search for
selecting the parameters of the algorithm (both symbolically and
numerically) for optimal worst-case performance. We
illustrate our results numerically by solving an input-constrained
optimal control problem.
	\end{abstract}
	
	\section{Introduction}\label{sec:introduction}
	Three-operator splitting methods are aimed to solve optimization problems of the form 
	\begin{equation} \label{eq:ThreeOperatorProb}
	\begin{aligned}
	\min_{x\in\mathbb{R}^d} F(x)=f(x)+g(x)+h(x),
	\end{aligned}
	\end{equation}
	where $f, g$ and $h$ are proper, closed and convex and $h$ is Lipschitz differentiable. Problems of the form \eqref{eq:ThreeOperatorProb} encompass a variety of problems in signal processing, control, and machine learning, such as group LASSO~\cite{jacob2009group}, support vector machines~\cite{o2013splitting}, matrix completion~\cite{candes2010matrix} and optimal control~\cite{stathopoulos2016operator}.
	
	To solve \eqref{eq:ThreeOperatorProb},  \cite{davis2017three} proposed the three-operator splitting (TOS) method outlined below.
	\begin{algorithm} 
		\caption{Three-Operator Splitting (TOS)} \label{alg:TOS}
		\textbf{Input}: $z_0 \in \mathbb{R}^d, \alpha,\lambda >0$.\\
		\textbf{for} $k=0,1,2,\cdots$\\
		\[
		\begin{aligned}
		&x_B^{k}=\prox_{\alpha g}(z^k);\\
		&y^{k}=2{x_B^k-z^k-\alpha \nabla h(x_B^k)};\\
		&x_A^{k}=\prox_{\alpha f}(y^k);\\
		&z^{k+1}=z^k+\lambda(x_A^k-x_B^k);
		\end{aligned}
		\]
		\textbf{endfor}
	\end{algorithm}
	
\noindent In Algorithm \ref{alg:TOS}, $\prox$ is the proximal operator (see Definition~\ref{def:prox}), $\alpha$ is the proximal stepsize and $\lambda$ is the relaxation parameter. \cite{davis2017three} proves that a proper selection of $\lambda$ and $\alpha$ ensures that the sequence $\{x_B^k\}$ converges asymptotically to a minimizer of \eqref{eq:ThreeOperatorProb}. The rate of convergence towards optimality depends on the regularity assumptions about $f, g$ and $h$. In this paper, our goal is to develop a principled and systematic way to analyze the convergence of TOS under various assumptions about $f$, $g$ and $h$.

	\textit{Related Work.} 
To solve problems of the form \eqref{eq:ThreeOperatorProb} with two or more nonsmooth terms, several splitting methods have been proposed. For example, \cite{raguet2013generalized,raguet2015preconditioning} propose a generalized forward-backward splitting algorithm which weakly converges to the minimizer of \eqref{eq:ThreeOperatorProb}.
A primal-dual method based on reformulating~\eqref{eq:ThreeOperatorProb} as a saddle point problem has been proposed by \cite{condat2013primal, vu2013splitting,li2016fast,yan2018new,pedregosa2018adaptive}. \cite{pedregosa2018adaptive,chambolle2016ergodic} and~\cite{davis2017three} prove the $O(1/k)$ ergodic convergence rate on the saddle point suboptimality and function value suboptimality, respectively. When both $f(x)$ and $h(x)$ are Lipschitz differentiable, \cite{davis2017three, pedregosa2018adaptive} give an $O(1/k)$ convergence proof in terms of the objective function value suboptimality. Furthermore, they derive linear convergence under stronger assumptions. 

Recently, there has been a surge of interest in analysis and design of optimization algorithms using robust control and semidefinite programming \cite{lessard2016analysis,fazlyab2018analysis,hu2017dissipativity,van2017fastest,fazlyab2018design,seidman2019control,mohammadi2019performance,hassan2019proximal}. The main idea is to view the worst-case convergence analysis of optimization algorithms as robust stability analysis of a linear dynamical system in feedback connection with an uncertain component \cite{lessard2016analysis}. This perspective is useful in that it allows us to provide either new bounds or design new optimization algorithms in a systematic manner.
    	
\textit{Our Contribution.} The TOS Algorithm can be viewed as a linear dynamical system driven by the nonlinear operators $\prox_{\alpha f}, \prox_{\alpha g}$ and $\nabla h$. For analyzing the convergence of the algorithm to its fixed point(s), we use the framework of quadratic constraints to abstract these nonlinearities using the assumptions made about the oracle models of $f, g$ and $h$. 
	We then define a Lyapunov function for the algorithm whose decrease along the trajectories directly certifies convergence to an optimal solution at a specific rate. We then find sufficient conditions, in terms of matrix inequalities, to guarantee this decrease condition.  Depending on the regularity assumptions, we provide this convergence rate in terms of either the distance to the optimal solution, the norm of the optimality residual, or the objective value. These matrix inequalities can be used to select the parameters for optimal worst-case performance. 
	
	The rest of the paper is organized as follows. In Section~\ref{sec:pre}, we provide preliminaries and background. Then we analyze the sublinear and linear convergence of the algorithm under different sets of assumptions in Section~\ref{sec:sublinear} and Section~\ref{sec:linear}, respectively. In Section~\ref{sec:numerical}, we solve an optimal control problem to illustrate our analysis of convergence and parameter selection. Section~\ref{sec:conclusion} concludes the paper.
	

\section{Preliminaries}\label{sec:pre}
We denote by $I_d$ the $d$-dimensional identity matrix. For a function $f:\mathbb{R}^d \rightarrow \mathbb{R}$, the domain of $f$ is $\text{dom} \ f = \lbrace x \in \mathbb{R}^d \mid f(x) < \infty \rbrace$. The subdifferential of a convex function $f$ at point $x$ is the set $\partial f(x)  = \{ g \in \mathbb{R}^d \mid f(y) - f(x) \geq g^T (y - x), \forall y \in \text{dom} \ f \}$. With abuse of notation, we will denote $\partial f(x)$ as the subgradient of $f$ which is an element of the subdifferential of $f$ at $x$ as well. In this paper, unless explicitly specified otherwise, the norm $|| x ||$ of a vector $x$ denotes the $2$-norm of $x$. We denote the Kronecker product by $\otimes$ and the set of $d \times d$ symmetric matrices by $\mathbb{S}^d$. The spectral norm (maximum singular value) of a matrix $X$ is denoted by $\lVert X \rVert_2$.

    \begin{define} \label{def:prox} \textbf{(Proximal operator)} The proximal mapping of a convex function $f: \mathbb{R}^d \rightarrow \mathbb{R}$ is defined by
    \begin{equation}
    \begin{aligned}
    \prox_{f} (x)=\arg\min_{y} f(y)+\frac{1}{2} \lVert x-y \rVert^2.
    \end{aligned}
    \end{equation}	
    \end{define}
    \begin{define}\textbf{(Lipschitz differentiability)} A function $f:\mathbb{R}^d \rightarrow \mathbb{R}$ is $L_f$-Lipschitz differentiable on $\mathcal{S} \subseteq \text{dom} f$ if 
    \begin{equation}\label{eq:Lsmoothness}
    \begin{aligned}
    \lVert \nabla f(x)-\nabla f(y)\rVert \le L_f \lVert x-y \rVert
    \end{aligned} 
    \end{equation}
    holds for some $L_f>0$ and all $x,y \in \mathcal{S}$. Lipschitz differentiability implies 
    \begin{equation*}
          f(y) \le f(x)+\nabla f(x)^T (y-x) + \frac{L_f}{2} \lVert y-x\rVert^2
    \end{equation*}
    for all $x,y \in \mathcal{S}$.
    \end{define}
    \begin{define}
    \textbf{(Strong convexity)} A function $f:\mathbb{R}^d \rightarrow \mathbb{R}$ is called $m_f$-strongly convex on $\mathcal{S} \subseteq \text{dom} f$ ($m_f \geq 0$)
    if 
    \begin{equation}\label{eq:strong convex}
    \begin{aligned}
     (x-y)^T (p-q) &\ge m_f \lVert x-y \rVert^2
     \end{aligned}
    \end{equation}
holds for all $x,y \in \text{dom} \ f, \forall p \in \partial f(x),  \forall q \in \partial f(y)$.   
\end{define}

We denote the function class satisfying \eqref{eq:Lsmoothness} and \eqref{eq:strong convex} by $\mathcal{F} (m_f, L
_f)$. When $f$ is not differentiable, we have $L_f=\infty$ and we adopt the convention $1/L_f=0$.

\begin{define}(\textbf{Incremental quadratic constraints}~\cite{accikmecse2011observers}) A nonlinear function $\phi: \mathbb{R}^d \rightarrow \mathbb{R}^d$ satisfies the incremental quadratic constraint defined by $Q$ if for all $x, y \in \text{dom} \ \phi$
\begin{equation} \label{eq:QC}
\begin{bmatrix}
x-y\\
\phi (x)-\phi (y)
\end{bmatrix}^T
Q
\begin{bmatrix}
x-y\\
\phi (x)-\phi (y)
\end{bmatrix}
\ge 0,
\end{equation}
where $Q \in \mathbb{S}^{2d}$ is a symmetric, indefinite matrix.
\end{define}
A differentiable function $f$ belongs to the class $\mathcal{F}(m, L)$ on $\mathcal{S}$ if and only if the gradient function $\nabla f$ satisfies the incremental quadratic constraint in \eqref{eq:QC} where $Q = Q(m, L)$ is given by~\cite{nesterov1998introductory, lessard2016analysis} 
\begin{equation*} \label{eq:Q_phi}
    Q(m, L)=
    \begin{bmatrix}
    -\frac{m L}{m + L}&1/2\\
    1/2&-\frac{1}{m + L}
    \end{bmatrix} \otimes I_d.
\end{equation*}
	\subsection{Convergence Analysis of Three-Operator Splitting} \label{sec:analysisScheme}
	
	The TOS algorithm can be equivalently written in terms of the subgradients of $f$ and $g$ as:
	\begin{equation} \label{eq:UpdatingRule}
	\begin{aligned} 
	x_B^{k} &= z^k - \alpha \partial g(x_B^k)\\
	y^k     &= 2x_B^k - z^k -\alpha \nabla h(x_B^k)\\
	x_A^{k} &= y^k - \alpha \partial f(x_A^k)\\
	z^{k+1} &= z^k + \lambda (x_A^k - x_B^k).
	\end{aligned}
	\end{equation}
	The fixed points of the above iterations satisfy the following equations:
	\begin{equation} \label{eq:FixedPoint}
	\begin{aligned} 
	x_B^{\star} &= z^{\star} - \alpha \partial g(x_B^{\star})\\
	y^{\star} &= 2x_B^{\star} - z^{\star} -\alpha \nabla h(x_B^{\star})\\
	x_A^{\star} &= y^{\star} - \alpha \partial f(x_A^{\star})\\
	x_A^{\star} &= x_B^{\star}.
	\end{aligned}
	\end{equation}
	 By adding up both sides of~\eqref{eq:FixedPoint}, we find that the fixed points of the TOS algorithm satisfy
	\begin{equation}\label{eq:FirstOrderCond}
	\partial f(x_A^{\star})+\partial g(x_B^{\star})+\nabla h(x_B^{\star})= 0,
	\end{equation}
	which is the first-order optimality condition for problem \eqref{eq:ThreeOperatorProb}. 
	
	By defining the variable $u^k = x_A^k - x_B^k$, the iterates of the TOS algorithm can be viewed as a linear system of the form 
	\begin{equation*} \label{eq:LinearSys}
	z^{k+1} = z^k + \lambda u^k    
	\end{equation*}
	with state $z^k \in \mathbb{R}^d$, control input $u^k \in \mathbb{R}^d$ and state feedback control law 
	\begin{equation*}
	\begin{aligned}
	u^k = & \quad \psi(z^k) \\
	    = & \quad \prox_{\alpha f}( 2\prox_{\alpha g} (z^k) - z^k - \alpha \nabla h(\prox_{\alpha g} (z^k)) ) \\
	    & \quad - \prox_{\alpha g} (z^k).
	\end{aligned}
	\end{equation*}
	In Section~\ref{sec:sublinear} and Section~\ref{sec:linear}, we analyze the sublinear and linear convergence of the TOS algorithm using Lyapunov arguments.
	
	
	
	\ifx
	Throughout the paper, we will use the following energy functional:
	\begin{align}
	    V_k &= \|z^k-z^{\star}\|^2 + \theta_1 \sum_{i=0}^{k-1}(F(x_B^i)-F(x_B^\star)) \\ & \qquad + \theta_2 \sum_{i=0}^{k-1}\lVert \partial f(x_A^i) + \partial g(x_B^i) + \nabla h(x_B^i) \rVert^2 \notag
	\end{align}
	where $\theta_1,\theta_2 \geq 0$.\fi


	\section{Sublinear Convergence of TOS} \label{sec:sublinear}
	\subsection{Case 1: One Lipschitz Operator}\label{sec:Case1}
	In this part, we will investigate the convergence rate of TOS algorithm when $f, g$ and $h$ are proper, closed and convex and $h$ is Lipschitz differentiable.
	%
	We use the Lyapunov function 
	\begin{equation*} \label{eq:Case1LyapVk}
	V_k=\lVert z^k-z^{\star} \rVert^2 + \theta \sum_{i=0}^{k-1} \lVert \partial f(x_A^i) + \partial g(x_B^i) + \nabla h(x_B^i) \rVert^2
	\end{equation*}
	where $\theta>0$. Using this definition, we can show that the condition $V_{k+1} \leq V_k$ implies  
	\begin{equation} \label{eq:Lemma1Lyap}
		\underset{i = 0, \cdots, k-1}{\min} \lVert \partial f(x_A^i) + \partial g(x_B^i) + \nabla h(x_B^i) \rVert^2 \leq \frac{\lVert z^0 - z^{\star} \rVert^2}{\theta k}.
		\end{equation}
		
	
	
%
In the following theorem, we derive a matrix inequality in terms of $\alpha,\lambda$ and $\theta$ as a sufficient condition to guarantee that $V_{k+1} \leq V_{k}$ for all $k$. 
	
	\begin{thm} \label{Thm2} Let $m_f=m_g=m_h=0$ and $L_h<L_f=L_g=\infty$.
		Define $W_0$, $Q_1$, $Q_2$ and $Q_3$ as follows:
		\begin{subequations} \label{eq:Thm2Matrix}
			\begin{align}
			&W_0=
			\begin{bmatrix}
			\lambda^2+\theta/\alpha^2&0&-\lambda^2-\theta/\alpha^2&-\lambda\\
			0 &0&0&0\\
			-\lambda^2-\theta/\alpha^2&0&\lambda^2+\theta/\alpha^2&\lambda\\
			-\lambda&0&\lambda&0\\
			\end{bmatrix}\otimes I_d, \\
			&Q_1=
			\begin{bmatrix}
			\alpha I_d & -I_d\\
			0&0\\
			0&0\\
			0& I_d
			\end{bmatrix} 
			Q(m_g,L_g)
			\begin{bmatrix}
			\alpha I_d &0&0&0\\
			-I_d &0&0& I_d
			\end{bmatrix},\\
			&Q_2= 
			\begin{bmatrix}
			\alpha I_d&2I_d\\
			0&-I_d\\
			0&0\\
			0&-I_d
			\end{bmatrix}
			Q(m_h,L_h)
			\begin{bmatrix}
			\alpha I_d&0&0&0\\
			2 I_d&-I_d&0&-I_d
			\end{bmatrix},\\
			&Q_3=
			\begin{bmatrix}
			0&0\\
			0&I_d\\
			\alpha I_d&-I_d\\
			0&0
			\end{bmatrix}
			Q(m_f,L_f)
			\begin{bmatrix}
			0&0&\alpha I_d&0\\
			0&I_d&-I_d&0
			\end{bmatrix}.
			\end{align}
		\end{subequations}
		Suppose there exist $\lambda,\alpha,\theta>0,\sigma_1,\sigma_2,\sigma_3\geq 0$ such that the following matrix inequality
		\begin{equation} \label{eq:Case1LMI}
		W_0 + \sigma_1 Q_1 + \sigma_2 Q_2 + \sigma_3 Q_3 \preceq 0
		\end{equation}
		holds, then for all $f, g \in \mathcal{F}(0, \infty), h \in \mathcal{F}(0, L_h)$,
		Algorithm~\ref{alg:TOS} satisfies
		\begin{equation} \label{eq:Thm2Lyap}
		\underset{i = 0, \cdots, k-1}{\min} \lVert \partial f(x_A^i) + \partial g(x_B^i) + \nabla h(x_B^i) \rVert^2 \leq \frac{\lVert z^0 - z^{\star} \rVert^2}{\theta k}.
		\end{equation}
	\end{thm}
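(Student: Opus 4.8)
The plan is to show that the matrix inequality \eqref{eq:Case1LMI} forces the one-step decrease $V_{k+1}\le V_k$ for every $k$; since $V_0=\lVert z^0-z^\star\rVert^2$ and $V_k\ge \theta k\,\min_{i<k}\lVert\partial f(x_A^i)+\partial g(x_B^i)+\nabla h(x_B^i)\rVert^2$, the bound \eqref{eq:Thm2Lyap} then follows exactly as in \eqref{eq:Lemma1Lyap}, so the whole task reduces to certifying $V_{k+1}-V_k\le 0$. To do this I would, fixing an iteration index $k$, introduce the lifted error vector
\[
\xi^k=\begin{bmatrix}x_B^k-x_B^\star\\ y^k-y^\star\\ x_A^k-x_A^\star\\ z^k-z^\star\end{bmatrix}\in\mathbb{R}^{4d},
\]
and the oracle increments $\beta^k=\partial g(x_B^k)-\partial g(x_B^\star)$, $\gamma^k=\nabla h(x_B^k)-\nabla h(x_B^\star)$, $\delta^k=\partial f(x_A^k)-\partial f(x_A^\star)$. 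Subtracting the fixed-point relations \eqref{eq:FixedPoint} from the update \eqref{eq:UpdatingRule} line by line gives $\alpha\beta^k=(z^k-z^\star)-(x_B^k-x_B^\star)$, $\alpha\gamma^k=2(x_B^k-x_B^\star)-(z^k-z^\star)-(y^k-y^\star)$ and $\alpha\delta^k=(y^k-y^\star)-(x_A^k-x_A^\star)$; adding these and using $x_A^\star=x_B^\star$ produces the key identity $\alpha(\beta^k+\gamma^k+\delta^k)=-u^k$ with $u^k=x_A^k-x_B^k$, so by \eqref{eq:FirstOrderCond} the optimality residual satisfies $\lVert\partial f(x_A^k)+\partial g(x_B^k)+\nabla h(x_B^k)\rVert^2=\lVert u^k\rVert^2/\alpha^2$.

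Next I would establish the two facts that, together, close the argument. (i) Since $z^{k+1}-z^\star=(z^k-z^\star)+\lambda u^k$ and $u^k=(x_A^k-x_A^\star)-(x_B^k-x_B^\star)$, expanding $\lVert z^{k+1}-z^\star\rVert^2-\lVert z^k-z^\star\rVert^2$, adding $\theta\lVert u^k\rVert^2/\alpha^2$, and collecting terms in the blocks of $\xi^k$ should give $V_{k+1}-V_k=(\xi^k)^TW_0\,\xi^k$; only the first, third and fourth blocks of $\xi^k$ enter, consistent with the zero second row and column of $W_0$, and the coefficient of the $\lVert u^k\rVert^2$ term is exactly $\lambda^2+\theta/\alpha^2$. (ii) The right factor in each of $Q_1,Q_2,Q_3$ from \eqref{eq:Thm2Matrix} maps $\xi^k$ to $\alpha\,[\,x_B^k-x_B^\star;\,\beta^k\,]$, $\alpha\,[\,x_B^k-x_B^\star;\,\gamma^k\,]$ and $\alpha\,[\,x_A^k-x_A^\star;\,\delta^k\,]$ respectively (again by the line-by-line subtracted equations), so $(\xi^k)^TQ_1\xi^k$, $(\xi^k)^TQ_2\xi^k$, $(\xi^k)^TQ_3\xi^k$ equal $\alpha^2$ times the incremental quadratic constraints \eqref{eq:QC} of $\partial g$, $\nabla h$ and $\partial f$. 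Because $f,g\in\mathcal{F}(0,\infty)$ (monotonicity of the subdifferentials, i.e.\ the QC with $Q(0,\infty)$) and $h\in\mathcal{F}(0,L_h)$ (the QC with $Q(0,L_h)$ for $\nabla h$), these three quantities are all nonnegative.

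Finally, with $\sigma_1,\sigma_2,\sigma_3\ge 0$ I would conclude
\[
V_{k+1}-V_k=(\xi^k)^TW_0\,\xi^k\le(\xi^k)^T\Bigl(W_0+\sum_{j=1}^{3}\sigma_jQ_j\Bigr)\xi^k\le 0,
\]
the last inequality being \eqref{eq:Case1LMI}; telescoping $V_{k+1}\le V_k$ and invoking \eqref{eq:Lemma1Lyap} yields \eqref{eq:Thm2Lyap}. The only real work here is the bookkeeping behind (i) and (ii): guessing the correct lifted vector $\xi^k$ and then verifying that $W_0$ is precisely the Gram representation of $V_{k+1}-V_k$ in these coordinates and that $Q_1,Q_2,Q_3$ are precisely the pulled-back oracle constraints. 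I expect the cancellations in (i) that must land exactly on $\lambda^2+\theta/\alpha^2$ to be the most error-prone part; once those identities are in place, the matrix inequality does the rest in a single line.
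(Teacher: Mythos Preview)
Your proposal is correct and follows essentially the same route as the paper's proof: define the lifted vector $v_k=\xi^k$, verify $V_{k+1}-V_k=v_k^TW_0v_k$ via the residual identity $\lVert\partial f(x_A^k)+\partial g(x_B^k)+\nabla h(x_B^k)\rVert^2=\lVert u^k\rVert^2/\alpha^2$, check that $v_k^TQ_jv_k\ge 0$ are precisely the incremental QCs for $\partial g$, $\nabla h$, $\partial f$, and combine with \eqref{eq:Case1LMI}. The only difference is expository---you spell out the line-by-line subtracted identities more explicitly than the paper, which simply asserts the $W_0$ identity and then unpacks one $Q_j$ computation in full.
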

	
	\begin{proof}
    See Appendix \ref{pro:thm2}.
	\end{proof}
	By Theorem~\ref{Thm2}, any $(\lambda,\alpha,\theta,\sigma_1,\sigma_2,\sigma_3)$ that satisfy the matrix inequality \eqref{eq:Case1LMI} certifies an $\mathcal{O}(1/k)$ convergence of the TOS algorithm.
	%
	We can show that the matrix inequality~\eqref{eq:Case1LMI} has a symbolic solution:
	\begin{equation*}
	\begin{aligned}
	    &\alpha=(2-\lambda)/L_h, \quad \sigma_1=\sigma_2 = \sigma_3 =\frac{2 \lambda}{\alpha},\\
	    & \theta=(2-\lambda)^3 \lambda/(2 L_h^2),
	    \end{aligned}
	\end{equation*}
	for $\lambda \in (0,2)$. The solution is found by applying Sylvester's criterion~\cite{horn2012matrix} in Wolfram Mathematica. 
	
	
	
	\begin{remark} \label{rem: sublinear best lambda}
		To obtain the best convergence rate, we need to make $\theta$ as large as possible in \eqref{eq:Thm2Lyap}. Since $\theta=(2-\lambda)^3 \lambda/(2 L_h^2)$, a straightforward calculation shows that $\theta$ obtains the maximal value if we set $\lambda=\frac{1}{2}$. Then the following convergence hold:
		\begin{equation*}
		\begin{aligned}
		\underset{i = 0,\cdots,k-1}{\min} \lVert \partial f(x_A^i) + \partial g(x_B^i) & + \nabla h(x_B^i) \rVert^2 \\
		& \leq \frac{32 L_h^2 \lVert z^0 - z^{\star} \rVert^2}{27 k},
		\end{aligned}
		\end{equation*}
		or equivalently
		\begin{equation*}
		\underset{i = 0, \cdots, k-1}{\min} \lVert x_B^i - x_A^i \rVert^2 \leq \frac{8 \lVert z^0 - z^{\star} \rVert^2}{3 k}.
		\end{equation*}
	\end{remark}
	
	Next, we will prove the sublinear convergence of the TOS algorithm when both $f$ and $h$ are Lipschitz differentiable.
	
	
	\subsection{Case 2: Two Lipschitz Operators} \label{section:Case2}
	In this part, we assume that $g \in \mathcal{F}(0, \infty), f \in \mathcal{F}(0, L_f), h \in \mathcal{F}(0, L_h)$ with $L_f, L_h < \infty$. 
	We define the Lyapunov function
	\begin{equation*} \label{eq:Case2Lyap}
	V_k =\lVert z^k-z^{\star}\rVert^2 + \theta \sum_{i=0}^{k-1}[F(x_B^i)-F(x_B^{\star})].
	\end{equation*}
	When the Lyapunov function decreases along the trajectories of TOS, we can guarantee an $\mathcal{O}(1/\theta k)$ convergence rate in terms of objective values:
    \begin{equation*} \label{eq:Case2ConvergenceRate}
    \underset{i = 0,\cdots, k-1}{\min}[F(x_B^i)-F(x_B^{\star})] \le \frac{1}{\theta k} \lVert z^{0}-z^{\star} \rVert^2.
    \end{equation*}
    In the next theorem, we derive a matrix inequality that ensures $V_{k+1} \leq V_k$ for all $k$.
	
	

	
	\begin{thm} \label{thm:Case2}
		Define $W_1$ to be
		\begin{equation*}
		W_1=
		\left[
		\begin{array}{c|c}
		A& B \\ \hline 
		C& D
		\end{array}
		\right] \otimes I_d,
		\end{equation*}
		where
		\begin{equation*}
		\begin{aligned}
		&A=\begin{bmatrix}
		\lambda^2+(\frac{1}{\alpha}+\frac{L_f}{2}-\frac{2}{\alpha^2 L_h} )\theta&\frac{\theta}{\alpha^2 L_h}\\
		\frac{\theta}{\alpha^2 L_h}&-\frac{\theta}{2 \alpha^2 L_h}
		\end{bmatrix},\\
		&B=\begin{bmatrix}
		-\lambda^2-\theta(\frac{1}{2 \alpha}+\frac{L_f}{2})&-\lambda+\frac{\theta}{\alpha^2 L_h}\\
		0&-\frac{\theta}{2\alpha^2 L_h}
		\end{bmatrix},\\
		&C=\begin{bmatrix}
		-\lambda^2-\theta(\frac{1}{2 \alpha}+\frac{Lf}{2})&0\\
		-\lambda+\frac{\theta}{\alpha^2 L_h}&-\frac{\theta}{2\alpha^2 L_h},
		\end{bmatrix},\\
		&D=\begin{bmatrix}
		\lambda^2+\frac{\theta L_f}{2}&\lambda\\
		\lambda&-\frac{\theta}{2 \alpha^2 L_h}
		\end{bmatrix}.
		\end{aligned}
		\end{equation*}	
		Let $Q_1, Q_2, Q_3$ have the same form as in \eqref{eq:Thm2Matrix} with $m_f = m_g = m_h = 0$ and $L_f, L_h < L_g = \infty$. If there exist parameters $\theta, \alpha, \lambda>0$ and $\sigma_1,\sigma_2,\sigma_3 \geq 0$ such that the following matrix inequality
		\begin{equation} \label{eq:Case2LMI}
		W_1 + \sigma_1 Q_1 + \sigma_2 Q_2 + \sigma_3 Q_3 \preceq 0
		\end{equation}
		holds, then for all $g \in \mathcal{F}(0, \infty), f \in \mathcal{F}(0, L_f), h \in \mathcal{F}(0, L_h)$, Algorithm~\ref{alg:TOS} satisfies
		\begin{equation} \label{eq:Case2Convergence}
		\underset{i = 0,\cdots, k-1}{\min}[F(x_B^i)-F(x_B^{\star})] \le \frac{1}{\theta k} \lVert z^{0}-z^{\star} \rVert^2.
		\end{equation}
	\end{thm}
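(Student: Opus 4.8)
The plan is to follow the robust-control/dissipativity template already used for Theorem~\ref{Thm2}; the one genuinely new ingredient is turning the non-quadratic term $F(x_B^k)-F(x_B^{\star})$ in the Lyapunov function into a quadratic form. I would first record the trivial reduction: if $V_{k+1}\le V_k$ for every $k$, then $V_k\le V_0=\lVert z^0-z^{\star}\rVert^2$ by induction, and dropping the nonnegative term $\lVert z^k-z^{\star}\rVert^2$ leaves $\theta\sum_{i=0}^{k-1}[F(x_B^i)-F(x_B^{\star})]\le\lVert z^0-z^{\star}\rVert^2$; since the minimum of finitely many reals is at most their average, \eqref{eq:Case2Convergence} follows. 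Hence it suffices to prove that \eqref{eq:Case2LMI} implies $V_{k+1}\le V_k$ for all $k$.

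For that I would use the subgradient form \eqref{eq:UpdatingRule} together with the fixed-point relations \eqref{eq:FixedPoint} (recalling $x_A^{\star}=x_B^{\star}$), and lift the error into $\xi^k=\big[(x_B^k-x_B^{\star})^T,\,(y^k-y^{\star})^T,\,(x_A^k-x_A^{\star})^T,\,(z^k-z^{\star})^T\big]^T\in\mathbb{R}^{4d}$. Solving each line of \eqref{eq:UpdatingRule} for the operator output it defines, namely $\partial g(x_B^k)=\alpha^{-1}(z^k-x_B^k)$, $\nabla h(x_B^k)=\alpha^{-1}(2x_B^k-z^k-y^k)$ and $\nabla f(x_A^k)=\alpha^{-1}(y^k-x_A^k)$, and subtracting the same identity at the fixed point, shows that the three operator deviations and $z^{k+1}-z^{\star}=(z^k-z^{\star})+\lambda(x_A^k-x_B^k)$ are all linear in $\xi^k$. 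The selector matrices appearing in $Q_1,Q_2,Q_3$ of \eqref{eq:Thm2Matrix} realize exactly these linear maps, so the incremental quadratic constraints satisfied by $\partial g$ (monotonicity, since $g\in\mathcal{F}(0,\infty)$), by $\nabla h$ ($L_h$-cocoercivity) and by $\nabla f$ ($L_f$-cocoercivity) become $(\xi^k)^T Q_i\,\xi^k\ge 0$ for $i=1,2,3$.

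The core step is to dominate $V_{k+1}-V_k=\lVert z^{k+1}-z^{\star}\rVert^2-\lVert z^k-z^{\star}\rVert^2+\theta[F(x_B^k)-F(x_B^{\star})]$ by $(\xi^k)^T W_1\,\xi^k$. Expanding the first two terms with $z^{k+1}-z^{\star}=(z^k-z^{\star})+\lambda(x_A^k-x_B^k)$ already gives an exact quadratic form in $\xi^k$, which supplies the $\lambda$- and $\lambda^2$-entries of $A,B,C,D$. For the third term, $F=f+g+h$, I would bound each summand: convexity of $g$ gives $g(x_B^k)-g(x_B^{\star})\le\langle\partial g(x_B^k),x_B^k-x_B^{\star}\rangle$; the interpolation inequality for smooth convex functions gives $h(x_B^k)-h(x_B^{\star})\le\langle\nabla h(x_B^k),x_B^k-x_B^{\star}\rangle-\tfrac{1}{2L_h}\lVert\nabla h(x_B^k)-\nabla h(x_B^{\star})\rVert^2$, which is where the $1/(\alpha^2L_h)$ entries come from; and since $\nabla f$ is sampled at $x_A^k$, combining the quadratic upper bound of Lipschitz differentiability between $x_A^k$ and $x_B^k$ with convexity of $f$ at $x_A^k$ gives $f(x_B^k)-f(x_B^{\star})\le\langle\nabla f(x_A^k),x_B^k-x_A^{\star}\rangle+\tfrac{L_f}{2}\lVert x_B^k-x_A^k\rVert^2$, which is where the $L_f/2$ entries come from. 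Summing the three bounds and separating the part that is linear rather than quadratic in $\xi^k$, that linear part equals $\langle\nabla f(x_A^{\star})+\partial g(x_B^{\star})+\nabla h(x_B^{\star}),\,x_B^k-x_B^{\star}\rangle$, which vanishes by the first-order optimality condition \eqref{eq:FirstOrderCond}. Substituting the linear expressions for $\partial g(x_B^k)-\partial g(x_B^{\star})$, $\nabla h(x_B^k)-\nabla h(x_B^{\star})$ and $x_B^k-x_A^k$ in terms of $\xi^k$ and collecting coefficients is precisely what assembles $A,B,C,D$, yielding $V_{k+1}-V_k\le(\xi^k)^T W_1\,\xi^k$. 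Then for any $\sigma_i\ge 0$, $(\xi^k)^T W_1\,\xi^k\le(\xi^k)^T\big(W_1+\sum_{i}\sigma_i Q_i\big)\xi^k\le 0$ under \eqref{eq:Case2LMI}, which together with the reduction finishes the proof.

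I expect the main obstacle to be the bookkeeping in that core step. Among the many valid convexity/smoothness inequalities available for $f,g,h$, one must pick exactly the combination whose sum (i)~has its linear part annihilated by \eqref{eq:FirstOrderCond}, and (ii)~after substituting the $\xi^k$-representations of the operator deviations (keeping the $\alpha^{-1}$ factors from the elimination straight) collapses to precisely $W_1$. A different but equally legitimate choice, for instance plain convexity for $h$ or the interpolation inequality for $f$, would produce a different and likely looser, possibly infeasible, $W_1$, so matching this specific $W_1$ is the delicate part; the rest is a routine if lengthy computation.
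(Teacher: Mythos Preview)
Your proposal is correct and follows essentially the same route as the paper: bound $f(x_B^k)-f(x_B^{\star})$ via convexity at $x_A^k$ plus the $L_f$-descent lemma, bound $g$ via convexity, bound $h$ via the smooth-convex interpolation inequality, sum, observe that the affine remainder is killed by \eqref{eq:FirstOrderCond}, and conclude $V_{k+1}-V_k\le(\xi^k)^T W_1\,\xi^k$ before invoking the three quadratic constraints and the S-procedure. If anything, you are more explicit than the paper's proof about why the non-quadratic residual vanishes.
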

	
	\begin{proof}
    See Appendix~\ref{pro:Thm5}.
	\end{proof}
	For a given stepsize $\alpha > 0 $ and relaxation parameter $\lambda>0$ the best worst-case convergence rate corresponds to maximizing $\theta$ subject to the LMI in \eqref{eq:Case2LMI}, which is an SDP. Note that we can use Schur Complements to convexify \eqref{eq:Case2LMI} with respect to $\lambda$, as follows.
	%
	%
	%
	%
	First, define
	\begin{equation*}\label{eq:Schur}
	\begin{aligned}
	&\eta =
	\begin{bmatrix}
	\lambda&0&-\lambda&0
	\end{bmatrix}^T \otimes I_d,\\
	&M_2 = W_1-\eta \eta^T+\sigma_1 Q_1+\sigma_2 Q_2 +\sigma_3 Q_3,
	\end{aligned}
	\end{equation*}
	and 
	\begin{equation*}
	\begin{aligned}
	\widetilde{W_1}=
	\begin{bmatrix}
	M_2&\eta\\
	\eta^T& -I_d
	\end{bmatrix}.
	\end{aligned}
	\end{equation*}
	Then \eqref{eq:Case2LMI} reads as
	\begin{equation*} 
	M_2+\eta  \eta^T \preceq 0.
	\end{equation*}
    Since $M_2 +\eta \eta^T$ is the Schur complement of $\widetilde{W_1}$, 
	\eqref{eq:Case2LMI} is equivalent to $\widetilde{W_1} \preceq 0$, which is linear in $\lambda$.
	As a result, finding the best convergence rate is equivalent to solving the following SDP:
	\begin{equation*} \label{eq:thetaSDP}
	\begin{aligned}
	\underset{\theta,\lambda,\sigma_1,\sigma_2,\sigma_3}{\text{maximize}} & \quad \theta\\
	\text{subject to}& \quad \widetilde{W_1} \preceq 0\\
	&\quad \theta, \lambda>0 ,\sigma_1,\sigma_2,\sigma_3 \geq 0.
	\end{aligned}
	\end{equation*}
	%
	%
	Finally, we can solve the SDP over a range of stepsizes $\alpha$ to find the best stepsize. We plot $\theta^\star$ over a range of $L_f$ and $L_h$ in Fig.~\ref{fig:theta}. 
	
	\begin{figure}[h]
		\centering
		\includegraphics[width= 0.5\textwidth]{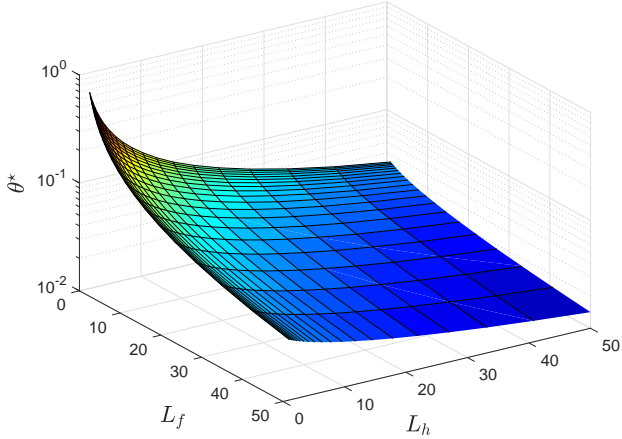} 
		\caption{\label{fig:theta} Optimal sublinear convergence rate $1/(\theta^\star k)$ can be achieved by searching over $\alpha$ for given $f\in \mathcal{F}(0,L_f), h \in \mathcal{F}(0,L_h)$ and $g \in \mathcal{F}(0,\infty)$.} 
	\end{figure}
	
	

In the next section, we analyze the convergence of TOS under strong convexity.	


	
\section{Linear Convergence of TOS} \label{sec:linear}
The TOS algorithm achieves linear convergence rate if there exists a $\rho \in (0,1)$ such that $\lVert z^{k}-z^{\star} \rVert \le \mathcal{O}(\rho^{k})$ for all $k$. 
%
In \cite{davis2017three}, it has been proved that the TOS algorithm achieves linear convergence rate under the following assumption.
\begin{assume} \label{linearassume}
Functions $f, g$ and $h$ in \eqref{eq:ThreeOperatorProb} satisfy $f \in \mathcal{F}(m_f, L_f), g \in \mathcal{F}(m_g, L_g), h \in \mathcal{F}(m_h, L_h)$, respectively and $(m_f+m_g+m_h)(1/L_f+1/L_g)1/L_h>0$ \cite{davis2017three}.
\end{assume}
A closed-form representation of an upper bound on the convergence rate is given in \cite{davis2017three}. However, the form of this bound is complicated and not tight. In \cite{ryu2018operator} the authors improved the upper bound on $\rho$ by formulating an SDP. In contrast, we use Lyapunov functions and incremental quadratic constraints to formulate an SDP that bounds $\rho$ and compare the results with those of \cite{ryu2018operator} 

To begin, we use the  following quadratic Lyapunov function:
\begin{equation*}\label{eq:LyapV_k}
    V_k=\lVert z^k-z^{\star}\rVert^2.
\end{equation*}
If there exists a $\rho \in (0,1)$ such that $V_{k+1} \le \rho^2 V_k$ holds for all $k>0$, then the algorithm is exponentially convergent.




The following theorem provides a sufficient condition in terms of a matrix inequality to achieve linear convergence of the TOS algorithm.

\begin{thm} \label{thm:LinearConvergence}
    Define $W_2$ as 
   \begin{equation*}
         W_2 = 
        \begin{bmatrix}
        \lambda^2&0&-\lambda^2&-\lambda\\
        0&0&0&0\\
        -\lambda^2&0&\lambda^2&\lambda\\
        -\lambda&0&\lambda&1-\rho^2
        \end{bmatrix} \otimes I_d.
    \end{equation*}
If there exist $\sigma_1,\sigma_2,\sigma_3 \geq 0,\alpha, \lambda > 0$ and $\rho \in (0,1)$ such that the following matrix inequality
\begin{equation}\label{eq:LinearConvergenceMI}
    W_2+\sigma_1 Q_1+\sigma_2 Q_2+\sigma_3 Q_3 \preceq 0,
\end{equation}
holds where $Q_1,Q_2,Q_3$ are given in \eqref{eq:Thm2Matrix} with $m_f$, $L_f$, $m_g$, $L_g$, $m_h$, $L_h$ satisfying Assumption~\ref{linearassume}. Then Algorithm~\ref{alg:TOS} satisfies the following linear convergence rate,
\begin{equation} \label{eq:LinearConvergence}
     \lVert z^k-z^{\star} \rVert^2 \le \rho^{2k} \lVert z^0-z^\star \rVert^2.
\end{equation}
\end{thm}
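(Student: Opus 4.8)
The plan is to certify the contraction $V_{k+1} \le \rho^2 V_k$ with $V_k = \lVert z^k - z^\star\rVert^2$ by writing the difference $V_{k+1} - \rho^2 V_k$ as a quadratic form in a suitable stacked vector of increments, and then to dominate that quadratic form by the sum of the quadratic constraints attached to $\partial g$, $\nabla h$ and $\partial f$. First I would introduce the vector of ``error'' signals evaluated against the fixed point \eqref{eq:FixedPoint}: using the subgradient form \eqref{eq:UpdatingRule}, set
\begin{equation*}
\xi^k = \begin{bmatrix} x_B^k - x_B^\star \\ \nabla h(x_B^k) - \nabla h(x_B^\star) \\ x_A^k - x_A^\star \\ z^k - z^\star \end{bmatrix} \otimes (\text{blocks in } \mathbb{R}^d),
\end{equation*}
and express $\partial g(x_B^k) - \partial g(x_B^\star) = \tfrac{1}{\alpha}(z^k - z^\star) - \tfrac{1}{\alpha}(x_B^k - x_B^\star)$ from the first line of \eqref{eq:UpdatingRule}, and similarly $\partial f(x_A^k) - \partial f(x_A^\star) = \tfrac{1}{\alpha}(y^k - y^\star) - \tfrac{1}{\alpha}(x_A^k - x_A^\star)$ with $y^k - y^\star = 2(x_B^k - x_B^\star) - (z^k - z^\star) - \alpha(\nabla h(x_B^k) - \nabla h(x_B^\star))$. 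Since $z^{k+1} - z^\star = (z^k - z^\star) + \lambda(x_A^k - x_B^k)$ and $x_A^\star = x_B^\star$, the update is linear in $\xi^k$, so $V_{k+1} - \rho^2 V_k = (\xi^k)^T W_2 \xi^k$ after substituting these linear relations — this is exactly the matrix $W_2$ claimed, the $1-\rho^2$ entry in the lower-right block coming from $\lVert z^k - z^\star\rVert^2 - \rho^2\lVert z^k - z^\star\rVert^2$.

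Next I would invoke the incremental quadratic constraint characterization stated in the preliminaries: since $g \in \mathcal{F}(m_g, L_g)$, the pair $\big(x_B^k - x_B^\star,\ \partial g(x_B^k) - \partial g(x_B^\star)\big)$ satisfies \eqref{eq:QC} with $Q(m_g, L_g)$; expressing $\partial g(x_B^k) - \partial g(x_B^\star)$ through the linear relation above, this becomes $(\xi^k)^T Q_1 \xi^k \ge 0$, which matches the definition of $Q_1$ in \eqref{eq:Thm2Matrix} (the outer matrices are precisely the linear maps from $\xi$ to the pair $(x_B - x_B^\star, \partial g(x_B)-\partial g(x_B^\star))$). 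The same reasoning gives $(\xi^k)^T Q_2 \xi^k \ge 0$ from $h \in \mathcal{F}(m_h, L_h)$ applied to the pair $(x_B^k - x_B^\star, \nabla h(x_B^k) - \nabla h(x_B^\star))$, and $(\xi^k)^T Q_3 \xi^k \ge 0$ from $f \in \mathcal{F}(m_f, L_f)$ applied to $(x_A^k - x_A^\star, \partial f(x_A^k) - \partial f(x_A^\star))$. Multiplying by $\sigma_1, \sigma_2, \sigma_3 \ge 0$ and adding, the hypothesis \eqref{eq:LinearConvergenceMI} gives
\begin{equation*}
V_{k+1} - \rho^2 V_k = (\xi^k)^T W_2 \xi^k \le -\sum_{j=1}^3 \sigma_j (\xi^k)^T Q_j \xi^k \le 0,
\end{equation*}
so $V_{k+1} \le \rho^2 V_k$ for every $k$; iterating yields \eqref{eq:LinearConvergence}.

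One subtlety I would be careful about is the well-definedness and consistency of the stacked-vector bookkeeping when $L_g = \infty$ (so $g$ is nonsmooth): here $Q(m_g, L_g)$ uses the convention $1/L_g = 0$, and one must check that the $Q_1$ block structure in \eqref{eq:Thm2Matrix} already encodes this without requiring $\partial g$ to be single-valued — it suffices that \emph{some} subgradient selection satisfies the strong-convexity inequality \eqref{eq:strong convex}, which holds by definition, and that the particular selection realized by the prox step is the one appearing in \eqref{eq:UpdatingRule}. The main obstacle, and the step deserving the most care, is verifying that the algebraic substitution of the linear relations for $\partial g$, $\partial f$ and $y^k$ into both $V_{k+1} - \rho^2 V_k$ and into each quadratic constraint produces \emph{exactly} the matrices $W_2, Q_1, Q_2, Q_3$ as written — i.e., confirming that the outer ``selector'' matrices in \eqref{eq:Thm2Matrix} are the correct transposed linear maps and that no cross terms are dropped; this is a finite but delicate computation best relegated to the appendix, and indeed the paper defers it there.
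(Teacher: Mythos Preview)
Your overall strategy is exactly the paper's: write $V_{k+1}-\rho^2 V_k$ as a quadratic form in a stacked error vector, show that each $Q_i$ is the pullback of the incremental quadratic constraint for $g$, $h$, $f$ through the linear relations in \eqref{eq:UpdatingRule}, and conclude via the S-procedure inequality \eqref{eq:LinearConvergenceMI}. However, there is a concrete coordinate mismatch that would make the computation fail as written.

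The paper's stacked vector (see \eqref{eq:vector}) carries $y^k-y^\star$ in its second block, not $\nabla h(x_B^k)-\nabla h(x_B^\star)$. This distinction is invisible for $W_2$ and $Q_1$, whose second block-rows and block-columns are zero, so your derivation of $V_{k+1}-\rho^2 V_k=(\xi^k)^T W_2\,\xi^k$ and of $(\xi^k)^T Q_1\,\xi^k\ge 0$ is fine. But the selector matrices inside $Q_2$ and $Q_3$ in \eqref{eq:Thm2Matrix} explicitly pick off the second block as $y^k-y^\star$: the inner row $[\,2I_d\ \ -I_d\ \ 0\ \ -I_d\,]$ in $Q_2$ produces $2(x_B^k-x_B^\star)-(y^k-y^\star)-(z^k-z^\star)=\alpha(\nabla h(x_B^k)-\nabla h(x_B^\star))$, and the inner row $[\,0\ \ I_d\ \ -I_d\ \ 0\,]$ in $Q_3$ produces $(y^k-y^\star)-(x_A^k-x_A^\star)=\alpha(\partial f(x_A^k)-\partial f(x_A^\star))$. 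With your $\xi^k$ these rows would instead yield $2(x_B^k-x_B^\star)-(\nabla h(x_B^k)-\nabla h(x_B^\star))-(z^k-z^\star)$ and $(\nabla h(x_B^k)-\nabla h(x_B^\star))-(x_A^k-x_A^\star)$, which are not the required quantities, so $(\xi^k)^T Q_2\,\xi^k\ge 0$ and $(\xi^k)^T Q_3\,\xi^k\ge 0$ would \emph{not} follow from the quadratic constraints on $h$ and $f$.

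The fix is trivial: replace the second block of $\xi^k$ by $y^k-y^\star$ (equivalently, apply the invertible change of basis sending your $\xi^k$ to the paper's $v_k$). Then all four matrices $W_2,Q_1,Q_2,Q_3$ match \eqref{eq:Thm2Matrix} verbatim, and the rest of your argument --- including the careful remark about subgradient selections when $L_g=\infty$ --- goes through unchanged and coincides with the paper's proof.
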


\begin{proof}
See Appendix~\ref{pro:thm6}.
\end{proof}
%
Note that the matrix inequality in \eqref{eq:LinearConvergenceMI} is linear in all the parameters except for $\alpha$ and $\lambda$.
 %
We can use the same technique as shown in Section~\ref{section:Case2} to transform \eqref{eq:LinearConvergenceMI} into an LMI when the stepsize $\alpha$ is fixed. 
Let
	\begin{equation*}\label{eq:Schur_1}
	\begin{aligned}
	& \eta =
	\begin{bmatrix}
	\lambda & 0 & -\lambda&0
	\end{bmatrix}^T \otimes I_d,\\
	& M_3 = W_2-\eta \eta^T+\sigma_1 Q_1+\sigma_2 Q_2 +\sigma_3 Q_3,
	\end{aligned}
	\end{equation*}
	and 
	\begin{equation*}
	\begin{aligned}
	\widetilde{W_2}=
	\begin{bmatrix}
	M_3&\eta\\
	\eta^T&-I_d
	\end{bmatrix}.
	\end{aligned}
	\end{equation*}
	Then by Schur complement,~\eqref{eq:LinearConvergenceMI} is satisfied if and only if
	\begin{equation*} 
	\widetilde{W_2} \preceq 0.
	\end{equation*}
	Therefore, for a given stepsize $\alpha$, the best convergence rate can be found by solving the following SDP:
	\begin{equation} \label{eq:Case2SDP}
	\begin{aligned}
	\underset{\rho^2, \sigma_1, \sigma_2, \sigma_3, \lambda}{\text{minimize}} & \quad \rho^2 \\
	\text{subject to} & \quad \widetilde{W_2} \preceq 0\\
	& \quad \lambda>0 ,\sigma_1,\sigma_2,\sigma_3 \geq 0.
	\end{aligned}
	\end{equation}
	Denote the optimal solution to \eqref{eq:Case2SDP} by $\rho^\star(\alpha)$. Then  by a grid search of $\alpha > 0$, we can find the optimal bound $\rho^\star$ and the optimal stepsize $\alpha^\star$ through
	\begin{equation*}
	\rho^{\star}=\min_{\alpha>0} \rho^{\star} (\alpha)  \quad \text{and} \quad \alpha^{\star}=\arg\min_{\alpha>0} \rho^{\star} (\alpha).
	\end{equation*}
	
	In Fig.~\ref{fig:red_black}, we plot $\alpha \mapsto \rho^\star(\alpha)^2$ and contrast it with the bounds of \cite{ryu2018operator} for various regularity assumptions on $F$. We see from this figure that numerically we achieve the same bounds as in~\cite{ryu2018operator}. In fact, as shown in Appendix~\ref{append:duality},  the formulation in~\eqref{eq:Case2SDP} is the dual of the SDP developed in \cite{ryu2018operator}. 
	
\begin{figure*}[htb] 
\centering 
\begin{subfigure}{0.3\textwidth}
  \includegraphics[width=\linewidth]{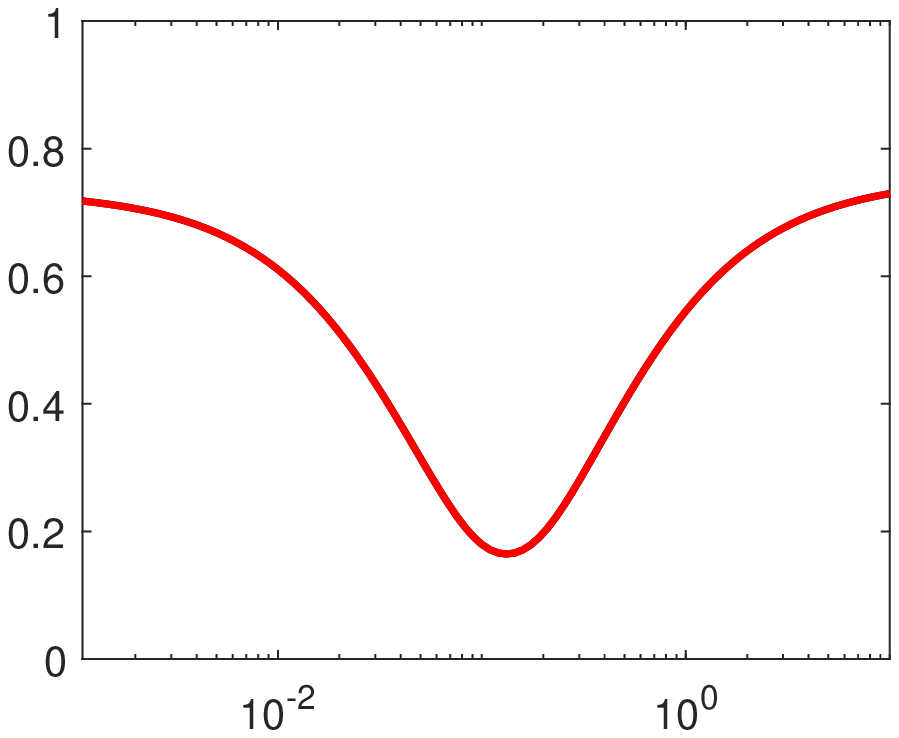}
  \caption{$m_f = 1, Lf = 100/7$ \\ $m_g = 4, L_g = 50, L_h = 1/9$}
  \label{fig:1}
\end{subfigure}\hfil 
\begin{subfigure}{0.3\textwidth}
  \includegraphics[width= \linewidth]{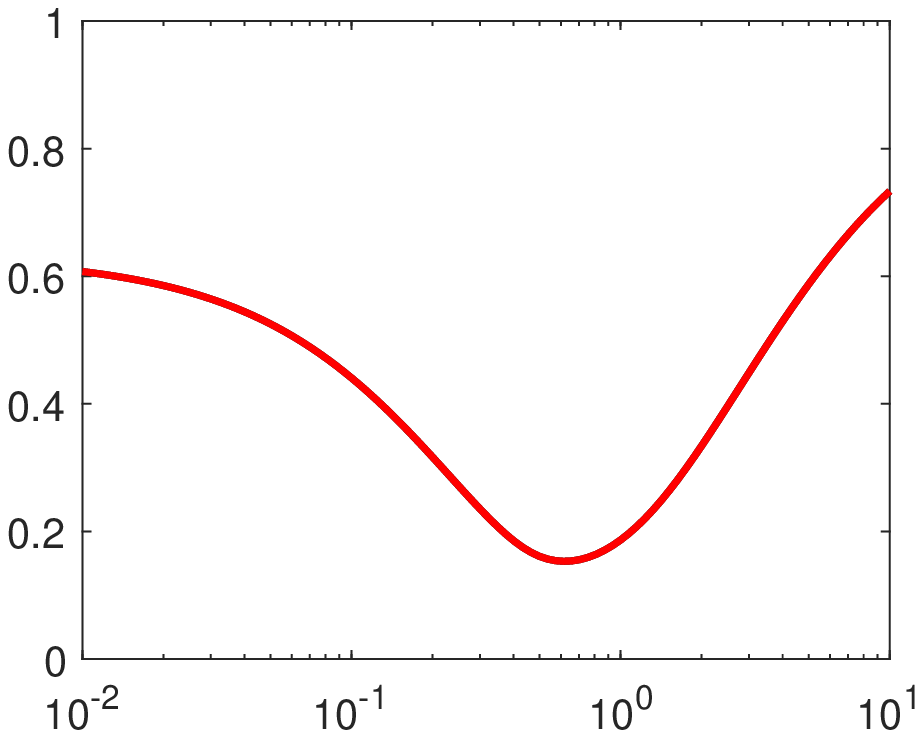}
  \caption{$m_f = 1, Lf = 7, m_g = 0.03$ \\ $ L_g = 2, m_h = 0.01, L_h = 0.05$}
  \label{fig:2}
\end{subfigure}\hfil 
\begin{subfigure}{0.3\textwidth}
  \includegraphics[width=\linewidth]{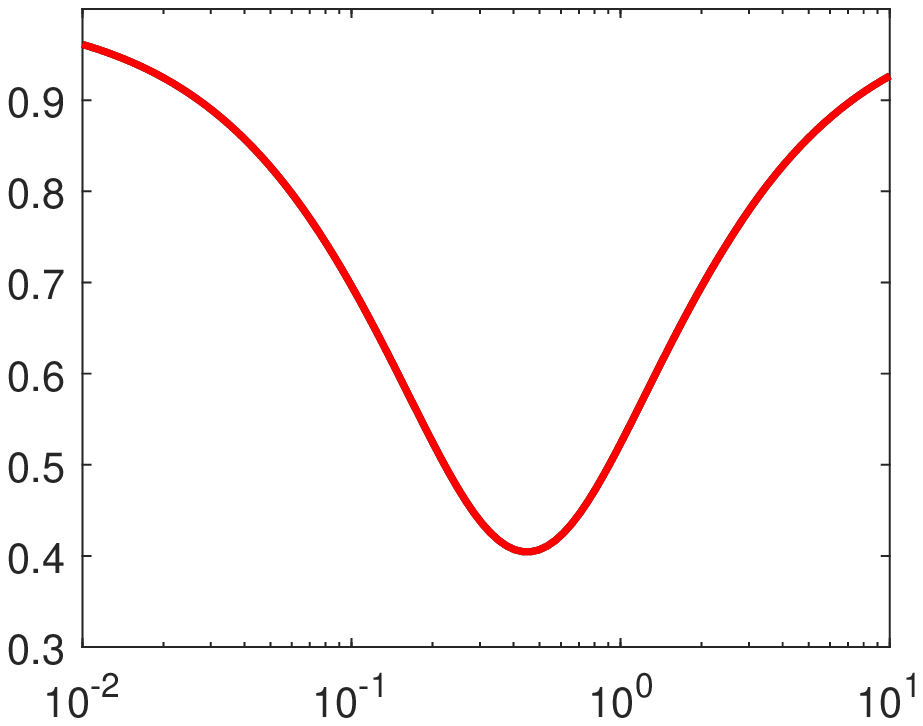}
  \caption{$m_f = 1, L_g = 5, L_h = 1/9$}
  \label{fig:3} \hfil
\end{subfigure}

\medskip
\begin{subfigure}{0.3\textwidth}
  \includegraphics[width=\linewidth]{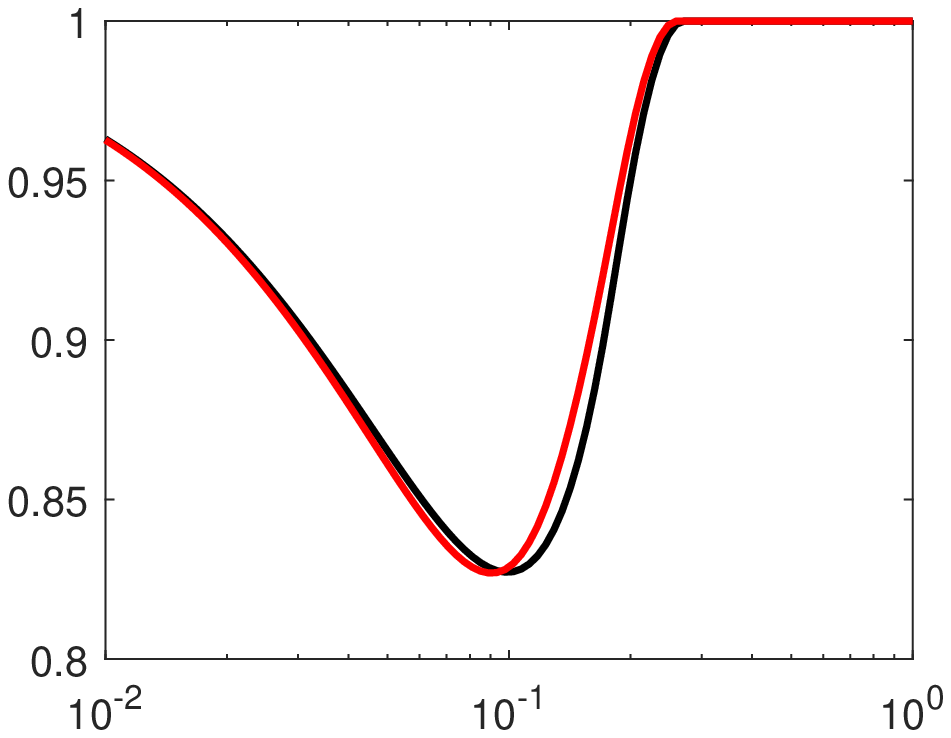}
  \caption{$m_g = 1, L_g = 10, L_h = 20$}
  \label{fig:4}
\end{subfigure}\hfil 
\begin{subfigure}{0.3\textwidth}
  \includegraphics[width=\linewidth]{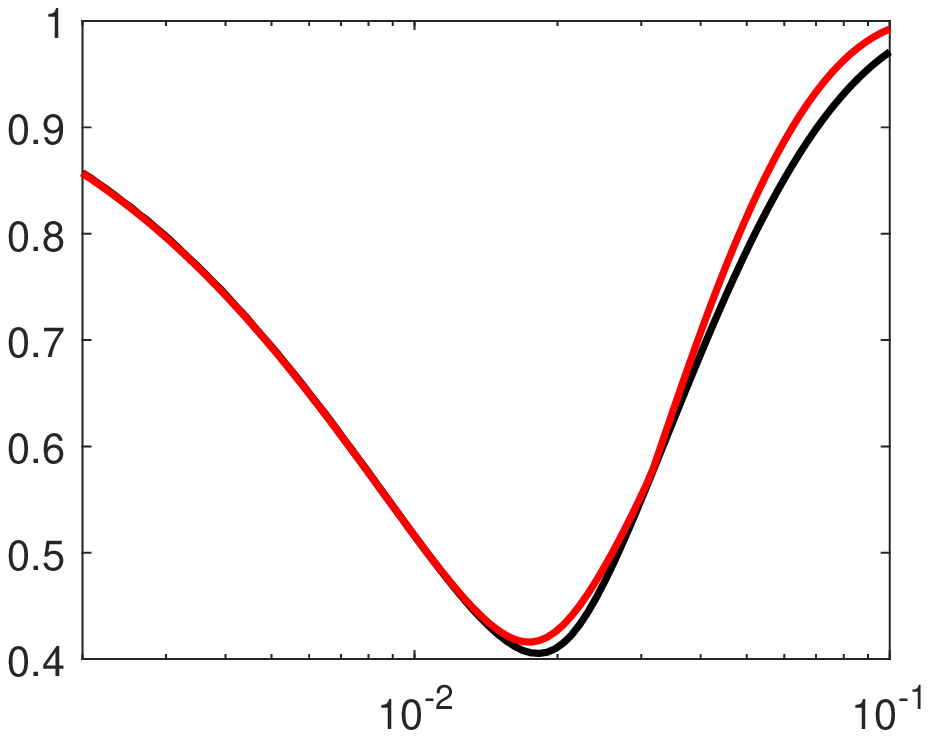}
  \caption{$m_f = 20, L_f = 20, L_h = 70$}
  \label{fig:5}
\end{subfigure}\hfil 
\begin{subfigure}{0.3\textwidth}
  \includegraphics[width=\linewidth]{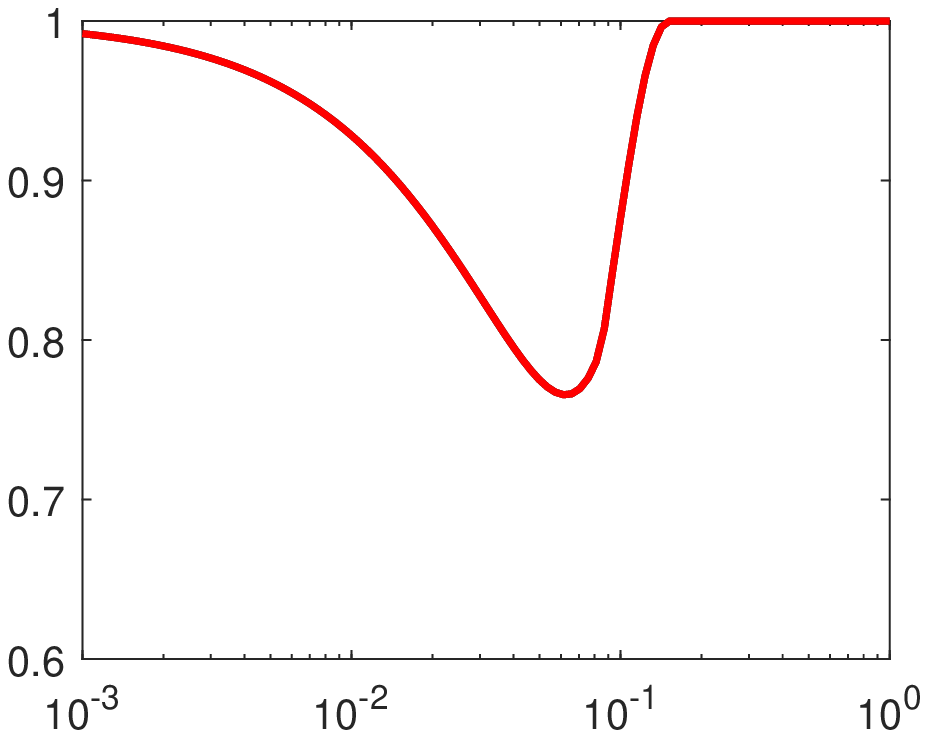}
  \caption{$L_f = 50, m_h = 2, L_h = 30$} \label{fig:where_they_fail}
  \label{fig:6} 
\end{subfigure}
\caption{Plots of $\rho^\star(\alpha)^2$ under different assumptions of $F(x)$. The $x$-axis denotes the stepsize $\alpha$ and the $y$-axis denotes the value of $\rho^\star(\alpha)^2$. Our results are given by red curves while the results in \cite{ryu2018operator} are represented by black curves. When the two curves overlap each other, only the red one is shown.}
\label{fig:red_black}
\end{figure*}

\section{Numerical Example} \label{sec:numerical}
In this section, we validate the parameter selection procedure in Section~\ref{sec:Case1} with a box constrained quadratic optimal control problem from \cite[Sec. \uppercase\expandafter{\romannumeral4}. A]{o2013splitting}:
\begin{equation}\label{eq:sparseLQR}
\begin{aligned}
\underset{x_t\in \mathbb{R}^n, u_t\in \mathbb{R}^m }{\text{minimize}} & \quad \frac{1}{2} \big ( \sum_{t=0}^{N} x_t^T Q_t x_t + \sum_{t=0}^{N-1}u_t^T R_t u_t \big ) \\
\text{subject to} & \quad x_{t+1} = A_t x_t +B_t u_t, \quad t=0,\cdots,N-1\\
                  & \quad \lVert u_t \rVert_{\infty} \le 1, \quad t=0,\cdots,N-1\\ 
                  & \quad x_0 = x_{\text{init}}
\end{aligned}
\end{equation}
where $Q_t \succeq 0$ and $R_t \succ 0$. We use $x = [x_0^T \ \cdots \ x_N^T]^T \in \mathbb{R}^{(N+1)n}$ and $u = [u_0^T \ \cdots \ u_{N-1}^T]^T \in \mathbb{R}^{Nm}$ to denote the concatenated states and control inputs, and $w = [x^T  \ u^T]^T \in \mathbb{R}^{(N+1)n+Nm}$ to denote the state-control trajectory. 

Define the set of state-control pairs that satisfy the dynamics of \eqref{eq:sparseLQR} as 
\begin{equation*}
\mathcal{D}=\{w \mid x_0=x_{\text{init}}, x_{t+1}=A_t x_t + B_t u_t, t = 0, \cdots, N-1 \},
\end{equation*}
and the set of state-control constraints as 
\begin{equation*}
\mathcal{C}=\{w \mid \lVert u \rVert_{\infty} \le 1 \}.
\end{equation*}
The indicator function $I_{\mathcal{D}}$ is defined by 
\begin{equation*}
    I_{\mathcal{D}}(w) = \begin{cases} 0 & w \in \mathcal{D} \\ \infty & \text{otherwise}  \end{cases}
\end{equation*}
and $I_{\mathcal{C}}$ is defined similarly. Then the box constrained optimal control problem \eqref{eq:sparseLQR} can be expressed as 
\begin{equation}\label{eq:LQR}
   \underset{w \in \mathbb{R}^{(N+1)n + Nm}}{\text{minimize}} \quad I_{\mathcal{C}}(w) + I_{\mathcal{D}}(w) + \frac{1}{2} w^T E w
\end{equation}
where 
\begin{equation*}
E = \text{diag}(Q_0, \cdots, Q_N, R_0, \cdots, R_{N-1}).
\end{equation*}
Let $f(w) = I_\mathcal{C}(w), g(w) = I_\mathcal{D}(w)$ and $h(w) = \frac{1}{2} w^T E w$. It can be easily checked that $f, g$ and $h$ are proper, closed and convex and $h$ is Lipschitz differentiable. Then~\eqref{eq:LQR} can be viewed as a three-operator splitting problem and falls into the one Lipschitz operator category in Section~\ref{sec:Case1}.

We consider a medium-size optimal control problem for illustration. For simplicity, we apply a linear time-invariant system with $x_t \in \mathbb{R}^{20}, u \in \mathbb{R}^5, A_t = A, B_t = B$ and constant $Q_t = Q, R_t = R$. The horizon length is $N = 20$. The data are all generated randomly and the matrix $A$ is scaled to be marginally stable, i.e., the largest magnitude of the eigenvalue of $A$ is one. 

According to Remark \ref{rem: sublinear best lambda}, $\lambda = \frac{1}{2}$ gives the fastest worst-case convergence. We solve the problem \eqref{eq:LQR} using the TOS algorithm with different values of $\lambda$ and stepsizes $\alpha=(2-\lambda)/L_h$, where $L_h$ equals to the spectral norm of matrix $E$ in this example. Fig.~\ref{fig:ConvergenceLambda} shows that all convergence rates are dominated by $1/k$ and $\lambda = \frac{1}{2}$ yields the fastest convergence as expected.
\begin{figure}
    \centering
    \includegraphics[width = 0.5 \textwidth]{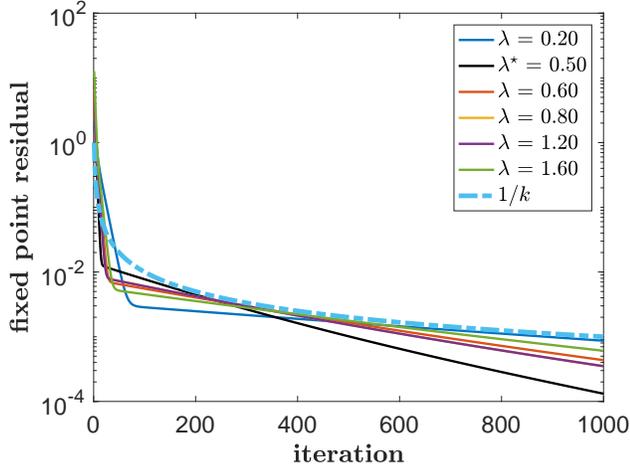}
    \caption{Convergence of TOS on the constrained LQR problem~\eqref{eq:sparseLQR} with varying $\lambda$ and  stepsize $\alpha = (2-\lambda)/L_h$. }\label{fig:ConvergenceLambda}
\end{figure}

\section{Conclusion} \label{sec:conclusion}
In this paper, we  proposed a unified framework, based on Lyapunov functions and quadratic constraints, for convergence rate analysis and parameter selection of the three-operator splitting algorithm \cite{davis2017three}. Under different regularity assumptions of the objective function, this approach can certify sublinear/linear convergence of the algorithm. In particular, we showed that our bounds are tight for the case of linear convergence.


\begin{appendices} 
\section{} \label{appendix}
Throughout the proofs the function classes of $f, g, h$ are parameterized by $f \in \mathcal{F}(m_f, L_f)$, $g \in \mathcal{F}(m_g, L_g)$, $h \in \mathcal{F}(m_h, L_h)$. We denote $Q_f = Q(m_f, L_f)$, $Q_g = Q(m_g, L_g)$, $Q_h = Q(m_h, L_h)$.
\subsection{Proof of Theorem~\ref{Thm2}}\label{pro:thm2}
\begin{proof}
In this theorem, we assume $m_f = m_g = m_h =0, L_h < L_f = L_g = \infty$.
		Define vector $v_k$ as
		\begin{equation}\label{eq:vector}
		v_k=
		\begin{bmatrix}
		(x_B^{k}-x_B^{\star})^{T} \ (y^k-y^{\star})^{T}  \ (x_A^{k}-x_A^{\star})^{T} \ (z^k-z^{\star})^{T}\\
		\end{bmatrix}^{T}.
		\end{equation}
		For the Lyapunov function $V_k$ in \eqref{eq:Lemma1Lyap}, it can be easily checked that 
		\begin{equation*}
		V_{k+1}-V_{k}=v_k^T W_0 v_k
		\end{equation*}
		since $x_A^\star = x_B^\star$. Noting that
		\begin{equation*}
		\begin{aligned}
		&v_k^{T} Q_1 v_k \\
		&= \begin{bmatrix}
		z^k-z^{\star}\\
		x_B^k-x_B^{\star}
		\end{bmatrix}^{T}
		\begin{bmatrix}
		0&\alpha I_d \\
		I_d&-I_d
		\end{bmatrix}^{T} Q_g 
		\begin{bmatrix}
		0&\alpha I_d \\
		I_d&-I_d
		\end{bmatrix}
		\begin{bmatrix}
		z^k-z^{\star}\\
		x_B^k-x_B^{\star}
		\end{bmatrix} \\
		& = \alpha^2 \begin{bmatrix}
		x_B^k - x_B^\star \\
		\frac{z^k - x_B^k}{\alpha} - \frac{z^\star - z_b^\star}{\alpha}
		\end{bmatrix}^{T} Q_g 
		\begin{bmatrix}
		x_B^k - x_B^\star \\
		\frac{z^k - x_B^k}{\alpha} - \frac{z^\star - z_b^\star}{\alpha}
		\end{bmatrix}\\
		& = \alpha^2 \begin{bmatrix}
		x_B^k - x_B^\star \\
		\partial g(x_B^k) - \partial g(x_B^\star)
		\end{bmatrix}^{T} Q_g 
		\begin{bmatrix}
		x_B^k - x_B^\star \\
		\partial g(x_B^k) - \partial g(x_B^\star)
		\end{bmatrix} \\
		& \geq 0
		\end{aligned}
		\end{equation*}
		for all $k$ where the third equality comes from~\eqref{eq:UpdatingRule} and the last inequality applies the property of the incremental quadratic constraints.
		
		Similarly, applying the alternations in~\eqref{eq:UpdatingRule} and incremental quadratic constraints on $h$ and $f$, we have that
		\begin{equation*}
		\begin{aligned}
		 & v_k^T Q_2 v_k \\
		& = \alpha^2 \begin{bmatrix}
		x_B^k-x_B^{\star}\\
		\nabla h(x_B^k) - \nabla h(x_B^\star)
		\end{bmatrix}^T Q_h \begin{bmatrix}
		x_B^k-x_B^{\star}\\
		\nabla h(x_B^k) - \nabla h(x_B^\star)
		\end{bmatrix} \\
		& \ge 0,
		\end{aligned}
		\end{equation*}
and
		\begin{equation*}
		\begin{aligned}
		&v_k^T Q_3 v_k\\
		& = \alpha^2 \begin{bmatrix}
		x_A^k - x_A^\star \\ \partial f(x_A^k) - \partial f(x_A^\star)
		\end{bmatrix}^T Q_f \begin{bmatrix}
		x_A^k - x_A^\star \\ \partial f(x_A^k) - \partial f(x_A^\star)
		\end{bmatrix} \geq 0 
		\end{aligned}
		\end{equation*}
for all $k$. If there exist $\sigma_1, \sigma_2, \sigma_3 \geq 0$ such that \eqref{eq:Case1LMI} holds, we obtain that
\begin{equation} \label{eq:matrixInqVk}
v_k^T W_0 v_k + \sigma_1 v_k^T Q_1 v_k +  \sigma_2 v_k^T Q_2 v_k + \sigma_3 v_k^T Q_3 v_k \le 0
\end{equation}
and the last three terms on the left-hand side of~\eqref{eq:matrixInqVk} are non-negative. As a result, we have $v_k^T W_0 v_k \le 0$ which leads to $V_{k+1} \le V_k$ for all $k$ and certifies the sublinear convergence~\eqref{eq:Thm2Lyap} of the TOS algorithm.
\end{proof}
		
\subsection{Proof of Theorem~\ref{thm:Case2}} \label{pro:Thm5}
\begin{proof}
In Theorem~\ref{thm:Case2} we assume $m_f = m_g = m_h = 0$ and $L_f, L_h < L_g = \infty$. From the fact that the function $f$ is convex and $L_f$-Lipschitz differentiable, we have that 
\begin{equation*}
		\begin{aligned}
		&f(x_A^k)-f(x_A^{\star}) \le {\nabla f(x_A^{k})}^T (x_A^k - x_A^{\star}),\\
		&f(x_B^k)-f(x_A^k) \le {\nabla f(x_A^{k})}^T (x_B^k - x_A^k) + \frac{L_f}{2} \lVert x_B^k - x_A^k \rVert^2.
		\end{aligned}
		\end{equation*}
Since $x_B^{\star}=x_A^{\star}$, adding up the above two inequalities, we have 
		\begin{equation} \label{eq:difference_f}
		f(x_B^{k})-f(x_B^{\star}) \le {\nabla f(x_A^{k})}^T (x_B^k - x_B^{\star}) + \frac{L_f}{2} \lVert x_B^k - x_A^k \rVert^2.
		\end{equation}
From the convexity of the function $g$, the following inequality 
\begin{equation} \label{eq:difference_g}
g(x_B^k)-g(x_B^{\star}) \le {\partial g(x_B^{k})}^T (x_B^k - x_B^{\star})
\end{equation} holds. Besides, since $h$ is $L_h$-Lipschitz differentiable, we have~\cite{lessard2016analysis}
\begin{equation} \label{eq:difference_h}
\begin{aligned}
h(x_B^k)-h(x_B^{\star}) \le & {\nabla h(x_B^k)}^T (x_B^k - x_B^{\star}) \\
		& - \frac{1}{2 L_h} \lVert \nabla h(x_B^k) - \nabla h(x_B^{\star}) \rVert^2.
		\end{aligned}
		\end{equation}
Adding up \eqref{eq:difference_f}, \eqref{eq:difference_g} and \eqref{eq:difference_h}, we have that 
		\begin{equation*}
		\begin{aligned}
		 & F(x_B^k)-F(x_B^{\star}) \\ & \le (\nabla f(x_A^k)+\partial g(x_B^k)+\nabla h(x_B^k))^T (x_B^k - x_B^{\star})\\
		& +\frac{L_f}{2} \lVert x_B^k - x_A^k \rVert^2
		- \frac{1}{2 L_h} \lVert \nabla h(x_B^k) - \nabla h(x_B^{\star}) \rVert^2.
		\end{aligned}
		\end{equation*}
It can be easily verified that $V_{k+1}-V_k \le v_k^{T} W_1 v_k$ for all $k$ if we define $v_k$ as 
		\begin{equation*}
		v_k = \begin{bmatrix}
		(x_B^{k}-x_B^{\star})^{T} \ (y^k-y^{\star})^{T} \ (x_A^{k}-x_A^{\star})^{T} \ (z^k-z^{\star})^{T}\\
		\end{bmatrix}^{T}.
		\end{equation*}
Using the same method as in Appendix~\ref{pro:thm2}, we conclude that \eqref{eq:Case2Convergence} holds for all $k$ if~\eqref{eq:Case2LMI} has a feasible solution.
\end{proof}
\subsection{Proof of Theorem~\ref{thm:LinearConvergence}}\label{pro:thm6}
\begin{proof}
Let $v_k$ be the same as \eqref{eq:vector}. Using the definition of $z^{k+1}$ in Algorithm~\ref{alg:TOS}, we can write
\begin{equation*}
V_{k+1}-\rho^2 V_k=v_k^T W_2 v_k.
\end{equation*} 
Then the same methods in the proof Appendix~\ref{pro:thm2} and \ref{pro:Thm5} apply here. If \eqref{eq:LinearConvergenceMI} holds, then $v_k^T W_2 v_k \le 0$, which means $V_{k+1} \le \rho^2 V_k$ and linear convergence~\eqref{eq:LinearConvergence} holds.
\end{proof}

\subsection{Duality} \label{append:duality}
In the linear convergence analysis of the TOS algorithm, to show the duality between our SDP formulation in Section~\ref{sec:linear} and the SDP in~\cite[Eq.(9)]{ryu2018operator}, we consider the following problem with the notation in Theorem~\ref{thm:LinearConvergence} for fixed stepsize $\alpha$ and relaxation parameter $\lambda$:
\begin{equation} \label{eq:equivalenceFormulation}
\begin{aligned}
\underset{\rho^2, \sigma_1, \sigma_2, \sigma_3}{\text{minimize}} & \ \rho^2 \\
\text{subject to} & \ G^T (W_O - \rho^2 W_I +\sigma_1 Q_1+\sigma_2 Q_2+\sigma_3 Q_3)G \preceq 0\\
& \quad \sigma_1,\sigma_2,\sigma_3 \geq 0,
\end{aligned}
\end{equation}  
\normalsize
where 
\begin{equation*}
\begin{aligned}
& W_O = 
\begin{bmatrix}
\lambda^2&0&-\lambda^2&-\lambda\\
0&0&0&0\\
-\lambda^2&0&\lambda^2&\lambda\\
-\lambda&0&\lambda&1
\end{bmatrix} \otimes I_d, \\
& W_I = \begin{bmatrix}
0&0&0&0\\
0&0&0&0\\
0&0&0&0\\
0&0&0&1
\end{bmatrix} \otimes I_d, \ 
G = \begin{bmatrix}
0 & 0 & 1 & 0 \\ 
-1 & 0 & 2 & -1\\
0 & 1 & 0 & 0 \\
1 & 0 & 0 & 0
\end{bmatrix} \otimes I_d.
\end{aligned}
\end{equation*}
The matrix inequality in~\eqref{eq:equivalenceFormulation} is equivalent to~\eqref{eq:LinearConvergenceMI} since $W_2 = W_O - \rho^2 W_I$ and $G$ is invertible. It is not hard to show that the Lagrangian dual of~\eqref{eq:equivalenceFormulation} is
\begin{equation}
\begin{aligned}
\underset{Z}{\text{maximize}} & \quad  \Tr(G^T W_O G Z) \\
\text{subject to} & \quad \Tr(G^T Q_1 G Z) \succeq 0 \\
& \quad \Tr(G^T Q_2 G Z) \succeq 0\\
& \quad \Tr(G^T Q_3 G Z) \succeq 0 \\
& \quad \Tr(G^T W_I G Z) = 1 \\
& \quad Z \succeq 0,
\end{aligned}
\end{equation}
which is equivalent to ~\cite[Eq.(9)]{ryu2018operator} under Assumption~\ref{linearassume}. Following the strong duality proof in~\cite{ryu2018operator}, we can show that our Lyapunov-function-based SDP~\eqref{eq:equivalenceFormulation} is the dual of that in~\cite{ryu2018operator} and hence achieves the same tight bounds on $\rho^2$.
\end{appendices}

	
	\bibliographystyle{ieeetr}
	\bibliography{reference}

\end{document}